\newtheorem{prop}{Proposition}%[section]
\newtheorem{thm}[prop]{Theorem}
\newtheorem*{thm*}{Theorem}
\newtheorem*{addendum*}{Addendum}
\newtheorem{cor}[prop]{Corollary}
\newtheorem{lem}[prop]{Lemma}
\newtheorem*{convention*}{Convention}
\theoremstyle{definition}
\newtheorem*{defn*}{Definition}
\newtheorem*{scholium*}{Scholium}
\theoremstyle{remark}
\newtheorem{example}[prop]{Example}
\newtheorem*{example*}{Example}
\numberwithin{equation}{section}
\newcommand{\QQ}{\mathbf{Q}}
\newcommand{\RR}{\mathbf{R}}
\newcommand{\ZZ}{\mathbf{Z}}
\newcommand{\SL}{\mathrm{SL}}
\newcommand{\centra}{\mathscr{Z}}
\newcommand{\se}{\subseteq}
\def\bs#1.{
              \def\temp{#1}
              \ifx\temp\empty
                   \mathcal{B}
              \else
                   \mathcal{B}(#1)
              \fi
}
\newcommand{\cat}{{\upshape CAT(0)}\xspace}
\newcommand{\tangle}[2]% angle de Tits
{\angle_\mathrm{T}(#1,#2)}
\newcommand{\aangle}[3]% angle d'Alexandrov
{\angle_{#1}(#2,#3)}
\newcommand{\cangle}[3]% angle de comparaison
{\overline{\angle}_{#1}(#2,#3)}
\DeclareMathOperator{\Isom}{Is}
\newcommand{\bd}{\partial} % mathoperator ajoute beaucoup d'espace....
\def\Aut{\mathop{\mathrm{Aut}}\nolimits}
\begin{document}
\title[Erratum and addenda]{Erratum and addenda to\\
``Isometry groups of non-positively curved spaces: discrete subgroups''}
\author[P.-E. Caprace]{Pierre-Emmanuel Caprace*}
\address{UCL -- Math, Chemin du Cyclotron 2, 1348 Louvain-la-Neuve, Belgium}
\email{pe.caprace@uclouvain.be}
\thanks{* F.R.S.-FNRS senior research associate}
\author[N. Monod]{Nicolas Monod}
\address{EPFL, 1015 Lausanne, Switzerland}
\email{nicolas.monod@epfl.ch}
\date{August 2019}%  ``\today'' est dangereux si on met le papier sur Arxiv, voir http://arxiv.org/help/faq/today
%\subjclass{20F65; 20E42, 20F50, 43A07} % AMS classification numbers
%\keywords{Non-positive curvature, \cat space, locally compact group, amenable group}
%
\begin{abstract}
We amend the statement of point~(i) in Theorem~1.3 in~\cite{Caprace-Monod_discrete} and supply the additional arguments and minor changes for the results that depend on it. We also seize the occasion and generalize to non-finitely generated lattices.
\end{abstract}
\maketitle
%\let\languagename\relax  % TO FIX A BUG IN RUNNING HEADERS AND BABEL

%\setcounter{tocdepth}{1}    %%%%  = seulement les sections 
%\tableofcontents

\section{The Euclidean Factor Theorem for \cat lattices}

\begin{flushright}
\begin{minipage}[t]{0.55\linewidth}\itshape\small
I'm fixing a hole where the rain gets in\\
And stops my mind from wandering
\begin{flushright}
\upshape Lennon--McCartney, \emph{Fixing a Hole}, 1967
\end{flushright}
\end{minipage}
\end{flushright}

\vspace{.5cm}
Let $X$ be a proper \cat space and let $n\geq 0$ be the dimension of the maximal Euclidean factor of $X$. Let $G < \Isom(X)$ be a closed subgroup acting minimally and cocompactly on $X$, and let $\Gamma < G$ be a finitely generated lattice. Theorem~1.3(i) in~\cite{Caprace-Monod_discrete} states that $\Gamma $ has a finite index subgroup $\Gamma_0$ that splits as $\Gamma_0 \cong \ZZ^n \times \Gamma'$, and that $n$ coincides with the maximal rank of a free abelian normal subgroup of $\Gamma$.

This result is incorrect,  as shown by the example below, which is a simple case of a beautiful general construction of Leary--Minasyan~\cite{LM} for which they prove a number of deep results. Our mistake lies in the erroneous assertion from the proof of Proposition~3.6 loc.~cit.\ that ``the commensurator of any lattice in $A$ is virtually abelian''.

However, the \textit{normaliser} of any lattice in the Euclidean motion group $A = \mathbf R^k \rtimes O(k)$ is indeed virtually abelian (see Lemma~\ref{lem:normaliser} below). Relying on this, we state and prove an amended version of Theorem~1.3(i), namely Theorem~\ref{thm:EuclideanFactor} below. That result ensures similarly that $n$ coincides with the maximal rank of a free abelian \textit{commensurated} subgroup of $\Gamma$.

Concerning the virtual splitting of $\Gamma$ as $\ZZ^n \times \Gamma'$, it does nonetheless hold provided $\Gamma$ is residually finite, see Theorem~\ref{thm:EuclideanFactor}\ref{it:ResFin}.

\begin{example}[See~\cite{LM}]
The construction starts from a Pythagorean triple, say $(3,4,5)$. The matrix 
$\alpha = \left( \begin{array} {rr} 
3/5 & 4/5 \\
-4/5 & 3/5
\end{array}
\right)$
is in $\SL_2(\ZZ[1/5]) \cap SO(2)$ and represents an  irrational rotation of $\RR^2$. Therefore the corresponding semi-direct product $\Lambda = (\ZZ[1/5])^2 \rtimes_{\alpha} \ZZ$ embeds as a dense subgroup in $A=\RR^2 \rtimes SO(2)$. Moreover, $\Lambda$ is finitely generated (by $\alpha$ together with a suitable pair of independent elements of $ (\ZZ[1/5])^2$). Let now $D' = (\QQ_5)^2 \rtimes_{\alpha} \ZZ$ be the semi-direct product of the additive group of the $2$-dimensional vector space over the $5$-adic rationals, by the cyclic  automorphism group generated by $\alpha$. The diagonal embedding of $(\ZZ[1/5])^2$ in $\RR^2 \times (\QQ_5)^2$ is a cocompact lattice embedding. It follows that $\Lambda$ embeds diagonally as a cocompact lattice in $A \times D'$, with dense projection on both factors. Since $D'$ is compactly generated (because it contains a dense copy of $\Lambda$) and does not have non-trivial compact normal subgroup, it has a continuous 
 faithful vertex-transitive action on a locally finite graph $\mathfrak g$ (a ``Cayley--Abels graph'', see e.g.\ the end of~5.2 in~\cite{Abels74}). Let $D$ be the extension of $D'$ by the fundamental group of $\mathfrak g$. Then $D$ acts vertex-transitively on universal cover of $\mathfrak g$, which is a regular locally finite tree $T$. Moreover the cocompact irreducible lattice $\Lambda< A \times D'$ lifts to a cocompact irreducible lattice $\Gamma$ in $G = A \times D$.  Notice that $G$ acts cocompactly and minimally on the proper \cat space $X$ defined by $X = \RR^2 \times T$. In particular $\Gamma$ is a \cat group.

On the other hand, no finite index subgroup of $\Gamma$ has a finitely generated free abelian normal subgroup. Indeed, we first observe that the projection $p(N)$ to $A$ of such a subgroup $N < \Gamma$ must be trivial. To prove this claim, notice that by construction, the projection of $\Gamma$ to $A$ is the metabelian group $\Lambda$. The intersection of $p(N)$ with the translation subgroup 
%
% Fin du paragraphe ci-dessous: remplacé T par V, puisque T est déjà pris pour le facteur arbre de X
%
$ V= \ZZ[1/5]^2$ of $\Lambda$ is a finitely generated free abelian normal subgroup. Any finitely generated subgroup of $V$ is generated by at most two elements, and is thus a discrete subgroup of $\RR^2$. Since   $p(N) \cap V$ is normalised by the irrational rotation   $\alpha$, it follows that $p(N) \cap V$  is trivial. Therefore $p(N)$ commutes with  $V$. But $V$ is its own centraliser in $\Lambda$, hence $p(N) < V$ and the claim follows since $p(N) \cap V$ is trivial.
%
%since the matrix $\alpha$ acts irreducibly on $\RR^2$, such a subgroup would project trivially\footnote{or densely?} on $A$. Hence that 
%
It follows that every finitely generated free Abelian subgroup  of $\Gamma$ would be an abelian normal subgroup of $D$, and hence it is necessarily trivial by~\cite[Theorem~1.10]{Caprace-Monod_structure}.
\end{example}

Observe that $G$ is a compactly generated locally compact group whose amenable radical coincides with $A$. The example shows moreover that the image of a cocompact lattice in $G$  need not project to a lattice in the quotient of $G$ by its amenable radical. This lies in sharp contrast with the case of connected Lie groups, where the image of a lattice modulo the amenable radical is always a lattice, by Auslander's theorem~\cite[Theorem~23.9.3]{Raghunathan}. 

\medskip
Theorem~1.3(i) from~\cite{Caprace-Monod_discrete} should be replaced by the following; notice that no finite generation assumption is made in~\ref{it:commen}.

\begin{thm}\label{thm:EuclideanFactor}
Let $X$ be a proper \cat space, $G< \Isom(X)$ a closed subgroup acting minimally and cocompactly, and $\Gamma < G$ any lattice. Let $E\cong\RR^n$ be the maximal Euclidean factor of $X$, where $n\geq 0$.

\begin{enumerate}[(i), leftmargin=2em]
%\begin{list}{(\roman{thmlist})}{\usecounter{thmlist} \labelwidth=2em \labelsep=0.5em \leftmargin=2em \itemindent=0em \itemsep=3pt}
\item\label{it:commen} $\Gamma$ commensurates a free abelian subgroup $\Gamma_A\cong \ZZ^n$, and $n$ is the largest such rank.\\
Moreover, any commensurated abelian subgroup of $\Gamma$ acts properly on $E$.% \footnote{\blue{Formulation alternative: \emph{Moreover the projection of any commensurated free abelian subgroup of $\Gamma$ to $\Isom(E)$ is discrete}. C'est peut-\^etre plus coh\'erent avec la formulation du point (ii). --- mais n'est-ce pas formellement un \'enonc\'e plus faible? (N.)}}
\end{enumerate}

\noindent
We now assume $\Gamma$ finitely generated.

\begin{enumerate}[resume*]
\item \label{it:norma} If $\Gamma$ virtually normalises a free abelian subgroup of rank~$k$, then  $\Gamma $ virtually splits as $\ZZ^k \times \Gamma'$. Moreover there is a corresponding invariant decomposition $X \cong \RR^{k} \times X'$, and the projection of $\ZZ^k$ (resp. $\Gamma'$) to $\Isom(X')$ is trivial (resp. discrete).

\item \label{it:VirtAb} If the projection of $\Gamma$ to $\Isom(E)$ is virtually abelian, then $\Gamma $ virtually splits as $\ZZ^n \times \Gamma'$.

\item \label{it:ResFin} If $\Gamma$ is residually finite, then again $\Gamma $ virtually splits as $\ZZ^n \times \Gamma'$.%the same conclusion as in~\ref{it:VirtAb} holds. 

\item  \label{last-item} If $\Gamma$ is cocompact, then  $\Gamma $ virtually splits as $\ZZ^k \times \Gamma'$ and there is a    corresponding invariant decomposition $X \cong \RR^{k} \times X'$ for some $k \geq 0$, satisfying the same  properties as in \ref{it:norma},  and  such that moreover every $\Gamma'$-orbit in $\partial X'$ is infinite.

%\end{list}
\end{enumerate}
\end{thm}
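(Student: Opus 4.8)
The plan is to reduce every assertion to the canonical decomposition of $X$ into its maximal Euclidean factor and a complement, and to read off the behaviour of $\Gamma$ from the two coordinate projections of $G$; the single new ingredient is Lemma~\ref{lem:normaliser}, to be invoked exactly where the original argument used the commensurator. First I would write $X = E \times Y$ with $E \cong \RR^n$ and $Y$ without Euclidean factor. This splitting is canonical, so $\Isom(X) = \Isom(E) \times \Isom(Y)$, the group $G$ respects it, and we obtain projections $p \colon G \to \Isom(E) = \RR^n \rtimes O(n)$ and $q \colon G \to \Isom(Y)$. By the structure theory of~\cite{Caprace-Monod_structure}: $p(G)$ contains the translation group $\RR^n$ of $E$ and is cocompact in $\Isom(E)$; $q(G)$ is closed and acts minimally and cocompactly on $Y$; the amenable radical of $G$ acts trivially on $Y$, hence sits as a cocompact subgroup of $\Isom(E)$; $q(G)$, acting geometrically minimally on a space with no Euclidean factor, has no finite orbit on $\partial Y$; and $G$ is unimodular and acts properly on $X$.

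For~\ref{it:commen} I would proceed in two independent steps. Given a commensurated abelian $B \le \Gamma$: the first step is that $q(B)$ has bounded orbits on $Y$ — otherwise its limit set in $\partial Y$ is a non-empty closed set depending only on the commensurability class of $B$, hence $\overline{q(\Gamma)}$-invariant, and one deduces a finite $q(G)$-orbit on $\partial Y$ (as in the analysis of fixed points at infinity in~\cite{Caprace-Monod_discrete}), which is impossible. Since $q(B)$ is then bounded it fixes a point $y_0 \in Y$, so $B$ preserves the flat $E \times \{y_0\}$ and acts on it through $p$; restricting the properly discontinuous $B$-action on $X$ to this closed $B$-invariant subspace shows $p(B)$ acts properly discontinuously on $E$, while $\ker(p|_B) = B \cap \ker p$ is a discrete abelian group with relatively compact image in $\Isom(Y)$, hence finite. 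Thus $B$ acts properly on $E$, so $\rank B \le n$. For a $\Gamma_A$ realising the bound: being a lattice, $\Gamma$ stabilises and acts cocompactly on some $n$-flat parallel to $E$ (a periodic flat of full Euclidean dimension, produced from the full limit set of $\Gamma$ and the flat-closing arguments of~\cite{Caprace-Monod_discrete}); this stabiliser is crystallographic, hence contains $\Gamma_A \cong \ZZ^n$, and since two $\Gamma$-translates of the flat are parallel and bound a flat strip their stabilisers are commensurable — whence $\Gamma$ commensurates $\Gamma_A$.

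Now assume $\Gamma$ finitely generated. For~\ref{it:norma}, pass to finite index so that $\Gamma$ normalises $L \cong \ZZ^k$; by~\ref{it:commen} $L$ acts properly on $E$, so after shrinking $L$ it acts as a lattice of translations of a $k$-flat, and (first step above) $q(L)$ is bounded, whence $\bigcap_{\ell \in L}\mathrm{Min}(\ell)$ splits as $\RR^k \times X'$ with $L$ acting as translations of the $\RR^k$-factor and trivially on $X'$; as the minimal set of a finitely generated group of hyperbolic isometries is invariant under commensuration, this decomposition is $\Gamma$-invariant, and by minimality — together with the flat/splitting input already present in~\cite{Caprace-Monod_discrete} — it is all of $X$. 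The projection of $\Gamma$ to $\Isom(\RR^k)$ then normalises the cocompact lattice $L$, so by Lemma~\ref{lem:normaliser} it is virtually abelian; containing $L$ cocompactly, after one more finite-index passage it becomes a group of translations. Hence $\Gamma$ surjects onto a free abelian translation group $\cong \ZZ^k$ with kernel $\Gamma'$ mapping to a discrete subgroup of $\Isom(X')$, and the surjection splits because $\ZZ^k$ is free, giving $\Gamma \cong \ZZ^k \times \Gamma'$. Item~\ref{it:VirtAb} then follows because virtual abelianness of $p(\Gamma)$ forces, after finite index, $p(\Gamma)$ to consist of translations of $E$ — the rotation parts must commute with the rank-$n$ translation sublattice coming from $\Gamma_A$ — and a properness/dimension argument makes $p(\Gamma)$ a cocompact lattice in $E$, reducing to~\ref{it:norma} with $k=n$; and~\ref{it:ResFin} follows because residual finiteness lets one control the commensuration of $\Gamma_A$ inside the profinite completion, upgrading $\Gamma_A$ to a virtually normal subgroup, so again~\ref{it:norma} applies with $k=n$.

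Finally, for~\ref{last-item}, when $\Gamma$ is cocompact so is its image $\bar\Gamma$ in $\Isom(X')$ for any $\Gamma$-invariant splitting $X \cong \RR^k \times X'$ (the image of a compact fundamental domain covers $X'$). Starting from~\ref{it:norma} with $k$ as large as possible, if $\bar\Gamma$ had a finite orbit on $\partial X'$ then, after finite index, this cocompact — though possibly non-discrete — group would fix a point of $\partial X'$, and the structure theory of cocompact isometry groups fixing a boundary point~\cite{Caprace-Monod_structure} would yield a further invariant Euclidean direction, contradicting the maximality of $k$ (equivalently, enlarging $k$; cocompactness bounds the geometric dimension, so the process terminates). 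Hence every $\bar\Gamma$-orbit, a fortiori every $\Gamma'$-orbit, on $\partial X'$ is infinite. The hard part throughout is~\ref{it:commen}: one must produce the commensurated $\ZZ^n$ and show it need only be \emph{commensurated}, not virtually normal — precisely because $p(\Gamma)$ need not be discrete, which is the Leary--Minasyan phenomenon — so periodic flats, rather than the possibly too small intersection $\Gamma \cap \RR^n$, have to do the work; the secondary difficulty is pinning down, in~\ref{it:VirtAb}--\ref{last-item}, which extra hypothesis is strong enough to turn Lemma~\ref{lem:normaliser} into an honest splitting.
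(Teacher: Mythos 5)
Your architecture (canonical splitting of $X$, Lemma~\ref{lem:normaliser} in place of the faulty commensurator claim, reduction of the later items to the normaliser case) matches the paper's, but several load-bearing steps are missing or wrong. First, in~\ref{it:commen} you produce the commensurated $\ZZ^n$ from a ``periodic flat parallel to $E$'' whose existence you attribute to flat-closing; no such statement is available, and your decomposition $X=E\times Y$ does not isolate the totally disconnected part of $\Isom(X)$, which is exactly what makes the construction work. The paper refines the splitting to $M\times\RR^n\times Y$ with $\Isom(Y)$ totally disconnected, intersects $\Gamma$ with $S\times A\times Q$ for a \emph{compact open} $Q<D$ (this is where the commensuration comes from, via~\cite[Lemma~3.2]{Caprace-Monod_discrete}, and why openness is indispensable), and then extracts the rank-$n$ abelian subgroup by the Auslander-type Lemmas~3.4--3.5 of~\cite{Caprace-Monod_discrete}; see Lemma~\ref{lem:SAD}. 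Your claim that the stabilisers of two parallel translated flats are commensurable is also unproved. Second, still in~\ref{it:commen}, your dichotomy for a commensurated abelian $B$ is ``bounded orbits or finite orbit at infinity''; the correct dichotomy (Proposition~\ref{prop:commensurated}) is ``fixed point in $X$ or minimal action'', because the invariant limit set may have circumradius $>\pi/2$, in which case there is no circumcentre and hence no finite orbit at infinity. That branch is excluded instead by the observation that a minimal abelian action forces Clifford isometries, i.e.\ a Euclidean factor; you do not address it.

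Third, in~\ref{it:norma} the step ``the surjection splits because $\ZZ^k$ is free'' fails: a surjection onto $\ZZ^k$ need not split for $k\ge 2$ (the integer Heisenberg group onto $\ZZ^2$), and even a splitting would only give a semidirect product, not the asserted direct product. The paper instead exhibits the given normal subgroup $H$ itself as a direct factor, showing $\Gamma=H\cdot\Lambda$ with $\Lambda$ normal and $H\cap\Lambda=1$, using that $H$ acts trivially on $M\times Y$ and on $\RR^{n-k}$. Fourth, your reductions of~\ref{it:VirtAb} and~\ref{last-item} to~\ref{it:norma} beg the question. In~\ref{it:VirtAb}, a virtually abelian image in $A$ can still be \emph{dense}, so ``$p(\Gamma)$ is a cocompact lattice in $E$'' is unjustified; to make $\Gamma_A$ normal one must prove that the closure of the projection of $\Gamma$ to $S\times D$ is discrete, which is the bulk of the paper's argument for that item. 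In~\ref{last-item}, deriving a contradiction with the maximality of $k$ from a finite orbit in $\partial X'$ requires producing a virtually normal $\ZZ^{k+1}$, which again needs a discreteness statement (the paper obtains it from~\cite[Prop.~K]{amenis} together with a centraliser argument showing $\centra(\Gamma')$ is finite). These discreteness steps are precisely where the Leary--Minasyan phenomenon must be excluded, and they cannot be waved away.
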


We recall that the \emph{virtual splittings} above mean by definition that a finite index subgroup splits as specified. Item~\ref{last-item} implies  that if $\Gamma$ fixes a point in $\partial X$, then $\Gamma$ virtually splits as $\ZZ\times \Gamma'$.

\medskip

The condition that $\Gamma$ normalizes a free abelian subgroup in~\ref{it:norma} is natural, since the center of $\Gamma$, and more generally the amenable radical of $\Gamma$, is virtually free abelian. In particular, we obtain the following.

\begin{cor}\label{cor:Ramen}
Let $X$ be a proper \cat space whose maximal Euclidean factor has dimension $n$. Let $G< \Isom(X)$ be a closed subgroup acting minimally and cocompactly, and $\Gamma < G$ any lattice. 

Then the amenable radical of $\Gamma$ is virtually isomorphic to $\mathbf Z^k$ for some $k \leq n$. If in addition $\Gamma$ is finitely generated, then $\Gamma $ has a finite index subgroup $\Gamma_0$ that splits as $\Gamma_0 \cong \ZZ^k \times \Gamma'$, where $k \leq n$ and $\Gamma'$ has trivial amenable radical (hence a  trivial centre).

Moreover, there is a corresponding $\Gamma_0$-invariant decomposition $X \cong \RR^{k} \times X'$ and the projection of $\Gamma_0$ to $\Isom(X')$ is discrete.
\end{cor}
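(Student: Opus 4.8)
The plan is to deduce the Corollary from Theorem~\ref{thm:EuclideanFactor}, the point being that the amenable radical is in particular a commensurated subgroup. Write $R\triangleleft\Gamma$ for the amenable radical of $\Gamma$; it is well-defined, since $\Gamma$ is countable and the subgroup generated by all amenable normal subgroups of $\Gamma$ is again amenable and normal. I would first record that $R$ is virtually abelian; here one quotes the relevant statement from~\cite{Caprace-Monod_discrete}, which is unaffected by the present erratum, or argues directly that the amenable group $R$ fixes a point of $\partial X$ or stabilises a flat of $X$ by Adams--Ballmann, whereupon the normality of $R$ in $\Gamma$ together with properness of $X$ confines $R$, up to finite index, to a discrete group of Euclidean isometries. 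Granting this, fix a finite-index abelian subgroup $R_0\le R$. For every $\gamma\in\Gamma$ one has $\gamma R_0\gamma\inv\le R$ with $[R:\gamma R_0\gamma\inv]=[R:R_0]<\infty$, so $R_0\cap\gamma R_0\gamma\inv$ has finite index in $R_0$; thus $R_0$ is commensurated by $\Gamma$. By Theorem~\ref{thm:EuclideanFactor}\ref{it:commen} the group $R_0$ acts properly on $E\cong\RR^n$, and a group acting properly by isometries on $\RR^n$ is virtually $\ZZ^k$ for some $0\le k\le n$; hence $R_0$, and therefore $R$, is virtually $\ZZ^k$ with $k\le n$. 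This settles the first assertion, $k$ being the rank of $R$.

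Assume now that $\Gamma$ is finitely generated; then $R$, being virtually $\ZZ^k$, is finitely generated, so it contains a \emph{characteristic} finite-index subgroup $R_1\cong\ZZ^k$ — for instance the intersection of the (finitely many) subgroups of $R$ of the same finite index as a fixed copy of $\ZZ^k$ inside $R$. Since $R_1$ is characteristic in the normal subgroup $R\triangleleft\Gamma$, it is normal in $\Gamma$, so Theorem~\ref{thm:EuclideanFactor}\ref{it:norma} applies: after passing to a finite-index subgroup $\Gamma_0\le\Gamma$, which we may take normal in $\Gamma$, we obtain a splitting $\Gamma_0\cong\ZZ^k\times\Gamma'$ and a $\Gamma_0$-invariant decomposition $X\cong\RR^k\times X'$ in which $\ZZ^k$ acts trivially on $X'$ while $\Gamma'$ — hence $\Gamma_0$ — projects discretely to $\Isom(X')$. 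This already gives the last sentence of the Corollary; it remains to pin down the amenable radical of $\Gamma'$.

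Since $\Gamma_0\triangleleft\Gamma$ has finite index, the amenable radical of $\Gamma_0$ is normal in $\Gamma$ and amenable, hence contained in $R$, while $R\cap\Gamma_0$ is amenable and normal in $\Gamma_0$; thus the amenable radical of $\Gamma_0$ equals $R\cap\Gamma_0$, of finite index in $R$, so it is again virtually $\ZZ^k$. On the other hand it equals $\ZZ^k\times R(\Gamma')$, forcing $R(\Gamma')$ to be finite. To upgrade "finite" to "trivial" I would proceed as follows. The $\ZZ^k$-factor acts properly on $E$ (Theorem~\ref{thm:EuclideanFactor}\ref{it:commen}) and trivially on $X'$, hence properly on the $\RR^k$-factor, so by the Flat Torus Theorem it acts there by translations and cocompactly; as $\Gamma'$ centralises it, $\Gamma'$ too acts on the $\RR^k$-factor by translations, and therefore the kernel of $\Gamma'\to\Isom(X')$ — which acts faithfully on $\RR^k$, since $\Gamma<\Isom(X)$ — is a finite group of translations, hence trivial, so $\Gamma'$ embeds in $\Isom(X')$. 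Then, given a finite normal subgroup $F\triangleleft\Gamma'$, one would like to invoke minimality: $F$ fixes a point of the complete \cat space $X'$, so $\Fix(F)$ is a non-empty $\Gamma'$-invariant closed convex subset of $X'$, and if the $\Gamma'$-action on $X'$ is minimal then $\Fix(F)=X'$, whence $F=1$ by faithfulness; in general one must instead pass to a $\Gamma'$-minimal invariant subset and to a suitable finite-index subgroup of $\Gamma'$ (adjusting $X'$ accordingly). This last step — eliminating a finite amenable radical of $\Gamma'$ without a residual-finiteness hypothesis — is the one I expect to be genuinely delicate; everything else is bookkeeping around Theorem~\ref{thm:EuclideanFactor} and standard facts about discrete groups of Euclidean isometries. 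Once $R(\Gamma')$ is trivial, so is the centre of $\Gamma'$, being an abelian (hence amenable) normal subgroup.
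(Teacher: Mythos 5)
Your overall strategy is the paper's: feed the (virtually free abelian, hence normalised up to finite index) amenable radical $R$ of $\Gamma$ into Theorem~\ref{thm:EuclideanFactor}. For the first assertion, though, you take a detour: you first need to know that $R$ is virtually abelian (which you only sketch, via Adams--Ballmann or an unspecified citation), and then you route through commensuration and Theorem~\ref{thm:EuclideanFactor}\ref{it:commen}. The paper instead applies \cite[Theorem~1.10]{Caprace-Monod_structure} and \cite[Theorem~1.1]{Caprace-Monod_discrete} directly to the normal amenable subgroup $R$: these show that $R$ acts \emph{trivially} on the non-Euclidean part $M\times Y$ of the canonical decomposition $X\cong M\times\RR^n\times Y$, so $R$ embeds as a discrete subgroup of $\Isom(\RR^n)$ acting properly, and Bieberbach gives $R$ virtually $\ZZ^k$ with $k\leq n$. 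Both routes reach the first assertion, but the paper's yields strictly more (triviality on $M\times Y$ rather than mere properness on $E$), and that surplus is exactly what is needed at the step you could not finish.

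That step is the genuine gap: you prove $R(\Gamma')$ is finite but not trivial, and you say so. To close it with the ingredients already at hand: replace $F=R(\Gamma')$ by its normal closure in $\Gamma$; since $F$ is normal in the finite-index subgroup $\Gamma_0$, this closure is a product of finitely many finite normal subgroups of $\Gamma_0$ and is therefore still a finite normal subgroup, now of $\Gamma$ itself. Hence $F\leq R(\Gamma)$, so $F$ acts trivially on $M\times Y$ by the structure theorems quoted above; it also acts trivially on the $\RR^k$-factor, because the image of $\Gamma'$ in $\Isom(\RR^k)$ is free abelian (proof of~\ref{it:norma}) and $F$ is finite. Consequently $\Fix(F)=W\times M\times Y$ where $W\subseteq\RR^n$ is a non-empty affine subspace, and $\Fix(F)$ is $\Gamma$-invariant by normality of $F$. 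Now the image of $\Gamma$ in $\Isom(\RR^n)$ acts cocompactly on $\RR^n$ --- Lemma~\ref{lem:SAD} provides $\Gamma_A\leq\Gamma$ projecting to a lattice in $A$ --- hence minimally, forcing $W=\RR^n$. Thus $F$ acts trivially on all of $X$ and $F=1$ since $\Gamma\leq\Isom(X)$. Your alternative idea of separating $F$ from the identity in a finite quotient of $\Gamma'$ would amount to a residual finiteness assumption and is not available here; the minimality argument you gesture at is the right one, but it must be run on the Euclidean factor of $X$ (where cocompactness of the lattice action is automatic) rather than on $X'$. The remaining bookkeeping in your write-up --- the characteristic $\ZZ^k$ inside $R$, the identity $R(\Gamma_0)=\ZZ^k\times R(\Gamma')$, the finiteness of $R(\Gamma')$ --- is correct.
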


\medskip

The proof of Theorem~\ref{thm:EuclideanFactor}\ref{it:commen} requires the following fact, which supplements a result on normal subgroups proved in~\cite[Theorem~1.10]{Caprace-Monod_structure}. 

\begin{prop}\label{prop:commensurated}
Let $X \neq \RR$ be an irreducible proper \cat space with finite-dimensional Tits boundary and $G < \Isom(X)$ any subgroup whose action is minimal and does not have a global fixed point in $\bd X$. 

Then any commensurated subgroup $H < G$ either fixes a point in $X$, or still acts minimally.
\end{prop}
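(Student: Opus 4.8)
The plan is to prove the dichotomy in the equivalent form: \emph{if $H$ does not fix a point of $X$, then $H$ acts minimally}. Since a subgroup fixing a point of $X$ has bounded orbits, and conversely a subgroup with bounded orbits fixes the circumcentre of an orbit (Bruhat--Tits), the hypothesis ``$H$ fixes no point of $X$'' is the same as ``$H$ has unbounded orbits''; this is what I would assume throughout. The engine of the argument is the following \emph{commensurability principle}: if $\mathcal O$ assigns to each subgroup $L<G$ a subset $\mathcal O(L)$ of $X$ or of $\bd X$ so that $\mathcal O(gLg\inv)=g\,\mathcal O(L)$ for all $g\in G$ and $\mathcal O(L_1)=\mathcal O(L)$ whenever $L_1<L$ has finite index, then $\mathcal O(H)$ is automatically $G$-invariant: for $g\in G$ the subgroup $H\cap gHg\inv$ has finite index in both $H$ and $gHg\inv$, whence $\mathcal O(H)=\mathcal O(H\cap gHg\inv)=\mathcal O(gHg\inv)=g\,\mathcal O(H)$.

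A first, easy, instance is the limit set: a finite-index subgroup has orbits at bounded Hausdorff distance from those of the ambient group, hence the same limit set in $\bd X$, so $\Lambda_H\subseteq\bd X$ is a closed $G$-invariant subset, non-empty because $X$ is proper and $H$ is unbounded. The substantive instance is the canonical invariant subspace. Assume first that $H$ fixes no point of $\bd X$. Since $\bd X$ is finite-dimensional, $H$ then has a minimal non-empty closed convex invariant subspace (Caprace--Lytchak), and the union $Z_H$ of all of them is a non-empty closed convex $H$-invariant subset carrying a canonical $H$-equivariant splitting $Z_H\cong M\times\RR^m$, with $M$ a minimal invariant subspace and $\RR^m$ the Euclidean factor transverse to the family of minimal sets. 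I would then show that $Z_H$ is invariant under passage to a finite-index subgroup $H_1<H$ (this is the technical heart: one uses that the minimal $H$-invariant and minimal $H_1$-invariant subspaces are mutually parallel, and that $H/H_1$ is finite, to identify $Z_{H_1}$ with $Z_H$). By the principle above $Z_H$ is then $G$-invariant, hence $Z_H=X$ by minimality of $G$; irreducibility of $X$, together with $X\neq\RR$ and the fact that $X$ is not a point (as $G$ fixes no point at infinity), forces $m=0$, so $X=M$ is a minimal $H$-invariant subspace, i.e.\ $H$ acts minimally.

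It remains to exclude the possibility that $H$ fixes some $\xi\in\bd X$. If the Busemann character $\chi_\xi\colon H\to\RR$ is non-trivial, then $H$ contains an isometry of positive translation length moving towards $\xi$, and the finite-dimensionality of $\bd X$ yields a canonical $H$-invariant subset of $X$ splitting off a line; being commensurability-invariant it is $G$-invariant, hence all of $X$, which would make $X$ a Euclidean space and contradict $X\neq\RR$ and irreducibility. If $\chi_\xi\equiv0$, then every $H$-orbit lies inside a single horosphere centred at $\xi$, so $\xi\notin\Lambda_H$ and $\Lambda_H$ is a \emph{proper} non-empty closed $G$-invariant subset of $\bd X$; here one invokes the finite-dimensional boundary structure (respectively Caprace--Lytchak on Busemann functions that are, or are not, bounded below) to obtain either a Euclidean splitting of $X$ — again excluded — or a canonical proper $H$-invariant closed convex subset of $X$ attached to $\Fix_{\bd X}(H)$, which by the principle is $G$-invariant and contradicts minimality of $G$. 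This exhausts all cases.

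The main obstacle is precisely the commensurability-invariance of the canonical invariant subspace $Z_H$, and of its horospherical counterpart in the boundary case: because $H$ is only commensurated, not normalised, one cannot transport $Z_H$ directly by elements of $G$, and must instead control how the poset of minimal invariant closed convex subspaces and its transverse Euclidean factor behave under passage to a finite-index subgroup. This is where the finite-dimensionality of the Tits boundary and the parallelism of minimal sets are indispensable, and where the bookkeeping at infinity (finite orbits, circumcentres, Euclidean splittings forced by $X\neq\RR$ and irreducibility) has to be carried out with care.
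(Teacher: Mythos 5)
Your overall strategy --- exhibit a canonical, commensurability-invariant object attached to $H$, conclude it is $G$-invariant, then use minimality and irreducibility of $X$ --- is exactly the paper's. But the object you choose does not work, and the step you yourself flag as ``the technical heart'' is not merely unproven: it is false as stated. The union $Z_H$ of all minimal closed convex $H$-invariant subspaces is \emph{not} invariant under passage to a finite-index subgroup. For a concrete failure, let $H$ be the infinite dihedral group acting on $\RR^2$ by integer translations along the $x$-axis together with the reflection in that axis, and let $H_1<H$ be the index-two translation subgroup. The only minimal $H$-invariant closed convex subspace is the $x$-axis, so $Z_H=\RR\times\{0\}$, whereas every horizontal line is minimal $H_1$-invariant, so $Z_{H_1}=\RR^2$. (The ambient space here is reducible, but your claimed lemma is a general statement about $Z_H$ invoked \emph{before} irreducibility enters, so it cannot be ``shown'' as you propose.) Since $H$ is only commensurated, not normalised, you cannot bypass this by transporting $Z_H$ directly, and your entire argument rests on this false reduction.

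The paper's fix is to use a different canonical object for which finite-index invariance is immediate: the \emph{convex limit set} $\Delta H$, namely the visual boundary of the closed convex hull of an $H$-orbit (well-defined by Lemma~4.2 of the structure-theory paper). A finite-index subgroup has orbits cobounded in those of $H$, hence convex hulls with the same visual boundary, so $\Delta H$ is unchanged under commensurability and therefore $G$-invariant --- this is the one-line version of your ``commensurability principle'', applied to an object that actually satisfies its hypotheses. In the dihedral example above, $\Delta H=\Delta H_1$ consists of the two endpoints of the $x$-axis, as it should. One then argues: if $\Delta H=\varnothing$ then $H$ has bounded orbits and fixes a point; otherwise $\Delta H$ has circumradius $>\pi/2$ (else $G$ fixes its circumcentre in $\bd X$), and Proposition~3.6 of the structure-theory paper converts the $G$-invariant boundary set $\Delta H$ into a canonical $G$-invariant closed convex subspace $Z_0\cong Z\times Z'$ of $X$, which irreducibility and minimality force to be all of $X$ with $\Delta H=\bd X$; minimality of the $H$-action follows. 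Note that this route also makes your entire second part --- the case analysis according to whether $H$ fixes a point of $\bd X$ and whether the Busemann character vanishes --- unnecessary; that discussion is in any case too sketchy to assess, since it again relies on unspecified ``canonical'' sets whose finite-index invariance would need the same missing justification.
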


\begin{proof}
Let $\Delta H$ be the \textbf{convex limit set} of $H$, defined as the visual boundary $\bd Y$ of the closed convex hull $Y$ of some $H$-orbit. By~\cite[Lemma~4.2]{Caprace-Monod_structure}, this set is well-defined, i.e. it does not depend of the choice of a specific $H$-orbit. If $H_0<H$ is a finite index subgroup, then each $H_0$-orbit is cobounded in the corresponding $H$-orbit and it follows that its convex hull is cobounded in the convex hull of the $H$-orbit. Thus, these two convex sets have the same visual boundary. It follows that if $H'$ is commensurable to $H$ in $G$, then $\Delta H' = \Delta H$. Since $H$ is commensurated by $G$ by hypothesis, the closed convex subset $\Delta H \se \bd X$ is $G$-invariant. If $\Delta H$ is empty, then $H$ has bounded orbits and fixes points in $X$. If $\Delta H$ is non-empty, then it must have circumradius $> \pi/2$ since otherwise $G$ fixes a point in $\bd X$ by~\cite[Proposition~1.4]{BalserLytchak_Centers}. By~\cite[Proposition~3.6]{Caprace-Monod_structure} the union of all minimal closed convex subspaces $Z \se X$ with $\bd Z = \Delta H$ is a non-empty closed convex subspace which splits as a product $Z_0 \cong Z \times Z'$. Since $\Delta H$ is $G$-invariant, so is $Z_0$. By hypothesis $X$ is irreducible and the $G$-action is minimal. It follows that $X = Z_0 = Z$ and that $\Delta H = \bd X$. In particular for every closed convex subset $X' \subsetneq X$, we have $\bd X' \subsetneq \bd X$, and we infer that the convex hull of every $H$-orbit is dense in $X$. In other words $H$ acts minimally on $X$. 
\end{proof}

As mentioned above, we shall also need the following. 
%	
% Vu que ce point a été la pierre d'achoppement, je trouve utile d'ajouter cet énoncé explicitement, avec une preuve détaillée (sauf si on le trouve tel quel dans la littérature?). 
%	
\begin{lem}\label{lem:normaliser}
Let $G = \mathbf R^k \rtimes O(k)$ and $\Gamma \leq G$ be a lattice. Then the normaliser $N_G(\Gamma)$ has a finite index subgroup contained in the translation group $\mathbf R^k$. In particular 	$N_G(\Gamma)$ is virtually abelian. 
\end{lem}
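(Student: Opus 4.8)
The plan is to deduce the statement from the classical fact that a lattice in a Euclidean motion group is a crystallographic group — concretely, that $L:=\Gamma\cap\mathbf R^k$ is a cocompact lattice in $\mathbf R^k$, i.e.\ a free abelian group spanning $\mathbf R^k$ (then automatically of finite index in $\Gamma$). Granting this, the lemma follows quickly. Since $\mathbf R^k$ is normal in $G$, every $g\in N_G(\Gamma)$ satisfies $gLg^{-1}=g\Gamma g^{-1}\cap g\mathbf R^k g^{-1}=\Gamma\cap\mathbf R^k=L$, so $N_G(\Gamma)\le N_G(L)$. Writing $g=(t,r)\in\mathbf R^k\rtimes O(k)$ and using $g(v,\mathrm{id})g^{-1}=(rv,\mathrm{id})$, membership in $N_G(L)$ amounts to $rL=L$, so $N_G(L)=\mathbf R^k\rtimes W$ with $W:=\{r\in O(k):rL=L\}$. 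This $W$ is closed in the compact group $O(k)$ and discrete (an element of $W$ close to $\mathrm{id}$ fixes a basis of $L$ elementwise, hence equals $\mathrm{id}$), so $W$ is finite. Thus $N_G(\Gamma)\le\mathbf R^k\rtimes W$, the subgroup $N_G(\Gamma)\cap\mathbf R^k$ has index at most $|W|$ in $N_G(\Gamma)$, and it lies inside $\mathbf R^k$; virtual abelianness is then immediate.

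The substance is the crystallographic structure of $\Gamma$. I would establish it by induction on $k$ (alternatively, one may invoke the generalized Bieberbach theorems); for $k=0$ there is nothing to prove. Since $\Gamma$ is discrete, $L$ is a discrete subgroup of $\mathbf R^k$, hence free abelian of some rank $m$ spanning a subspace $V\cong\mathbf R^m$; being discrete and spanning, $L$ is a cocompact lattice in $V$. If $m=0$ then $\Gamma$ embeds into $O(k)$, so it is finite and hence relatively compact in $\Isom(\mathbf R^k)$; but a relatively compact lattice forces the ambient group to have finite Haar measure, so $\Isom(\mathbf R^k)$ would be compact — impossible for $k\ge 1$. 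So assume $1\le m\le k$ and, aiming for a contradiction, that $m<k$. Each $\gamma\in\Gamma$ normalises $L$, so its rotation part preserves $V$ and $V^\perp$; hence $\Gamma\le H:=\Isom(V)\times\Isom(V^\perp)$, which is a closed subgroup of $G$ with $G/H$ compact, so $\Gamma$ is also a lattice in $H$. The subgroup $\Gamma_V:=\Gamma\cap\bigl(\Isom(V)\times\{\mathrm{id}\}\bigr)$ contains $L$, with $\Gamma_V/L$ embedding into the finite group $\{r\in O(V):rL=L\}$, so $\Gamma_V$ is a cocompact lattice in $\Isom(V)$. By the standard product argument — if a lattice in a direct product meets the first factor in a cocompact lattice, then its projection to the second factor is a lattice — the image $\rho(\Gamma)$ of $\Gamma$ under $H\to\Isom(V^\perp)$ is a lattice in $\Isom(V^\perp)$.

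To finish, I claim $\rho(\Gamma)$ has trivial translation subgroup. Indeed, if $\gamma\in\Gamma$ has trivial rotation part on $V^\perp$, say $\gamma=(t_V+t_{V^\perp},\,r_V\oplus\mathrm{id})$ with $r_V\in O(V)$, then $r_V$ has finite order $d$ (it preserves $L$), so $\gamma^d=\bigl((\textstyle\sum_{i=0}^{d-1}r_V^i)\,t_V+d\,t_{V^\perp},\ \mathrm{id}\bigr)$ lies in $\Gamma\cap\mathbf R^k=L\subseteq V$; as $(\sum_i r_V^i)t_V\in V$, this forces $d\,t_{V^\perp}=0$, hence $t_{V^\perp}=0$, so $\rho(\gamma)$ has no translation part. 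But the inductive hypothesis applied to the lattice $\rho(\Gamma)$ in $\Isom(V^\perp)=\Isom(\mathbf R^{k-m})$ (and $0<k-m<k$) says its translation subgroup spans $V^\perp\ne 0$ — a contradiction. Hence $m=k$, completing the induction.

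The step I expect to be the main obstacle is exactly this crystallographic structure: in particular, checking carefully that $\Gamma$ is a lattice in $H$ and that the product argument applies (discreteness and finite covolume of $\rho(\Gamma)$). The deduction of the lemma from it is routine.
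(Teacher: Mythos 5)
Your reduction of the lemma to the crystallographic structure of $\Gamma$ is correct, and it is in substance the same reduction as in the paper: there, too, one passes to $N_G(\Gamma)\leq N_G(\Gamma\cap\mathbf R^k)$, invokes the Bieberbach theorem for the fact that $\Gamma\cap\mathbf R^k$ is a lattice in $\mathbf R^k$, and then shows that the image of this normaliser in $O(k)$ is finite. The paper does the last step by a Baire category argument (the image is closed and countable in a compact group, after identifying the centraliser of $\Gamma\cap\mathbf R^k$ with $\mathbf R^k$), whereas you compute $N_G(L)=\mathbf R^k\rtimes W$ explicitly and observe that $W$ is closed and discrete; both finishes are valid, and both rely on the Bieberbach input that $L=\Gamma\cap\mathbf R^k$ spans $\mathbf R^k$ (otherwise your $W$ would contain the full orthogonal group of the orthogonal complement of the span of $L$ and would not be finite).

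The genuine gap is in your inductive proof of that crystallographic structure, at the case $m=0$: you assert that if $\Gamma\cap\mathbf R^k$ is trivial then ``$\Gamma$ embeds into $O(k)$, so it is finite''. An injective homomorphism of a discrete group into a compact group does not force finiteness, because the image need not be discrete: the cyclic group generated by a screw motion of $\mathbf R^3$ (an irrational rotation about an axis composed with a translation along that axis) is an infinite discrete subgroup of $\Isom(\mathbf R^3)$ with trivial translation subgroup which injects into $O(3)$. Excluding this behaviour for lattices is precisely the content of Bieberbach's first theorem, i.e.\ the statement you are trying to prove; the cases $1\le m<k$ of your induction are correct, but the induction cannot close without an independent argument for $m=0$, since your reduction to $\Isom(V^\perp)$ does not decrease the dimension when $V$ is trivial. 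The clean repair is the one you mention in passing and the one the paper adopts: cite the Bieberbach theorem for the fact that $\Gamma\cap\mathbf R^k$ is a cocompact lattice in $\mathbf R^k$, after which your first paragraph completes the proof.
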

\begin{proof}
Let $V = \mathbf R^k$ be the translation subgroup of $G$ and set $V_\Gamma = V \cap \Gamma$. Since $V$ is normal in $G$, we have $N_G(\Gamma) \leq N_G(V_\Gamma) =: H$. The group $V_\Gamma$ is a discrete normal subgroup of $H$. Therefore every element of $V_\Gamma$ has a discrete conjugacy class in $H$, so that the centraliser $C_H(v)$ is open in $H$ for all $v \in V_\Gamma$. By the Bieberbach Theorem, $V_\Gamma$ is a lattice in $V$, hence it is finitely generated. It follows that $C_H(V_\Gamma)$ is open in $H$. Clearly $V \leq C_H(V_\Gamma) \leq C_G(V_\Gamma) \leq N_G(V_\Gamma) = H$. On the other hand the centraliser in $G$ of the lattice $V_\Gamma$ must act trivially at infinity of the Euclidean space, so that $C_G(V_\Gamma) \leq V$. Therefore $V = C_H(V_\Gamma) = C_G(V_\Gamma)$, and the natural  image of $H$ in $G/V \cong O(k)$ is closed (because $V \leq H$) and at most countable (because $V$ is open in $H$). Since $O(k)$ is compact, the natural image of $H$ in $O(k)$ is thus finite by Baire's theorem. In particular, so is the image of $N_G(\Gamma)$, as required. 
\end{proof}	

\begin{lem}\label{lem:SAD}
Let $A = \RR^n \rtimes O(n)$, $S$ be a semi-simple Lie group with trivial centre and no compact factor, $D$ be a totally disconnected locally compact group, and $G = S \times A \times D$.

Then any lattice $\Gamma < G$  commensurates a free abelian subgroup $\Gamma_A < \Gamma$ of rank $n$. Moreover the  projection of $\Gamma_A$ to $A$ (resp. to $S$) is a lattice (resp. is trivial). 
\end{lem}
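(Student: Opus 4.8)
The plan is to reduce the statement to the structure of lattices in products and then to known facts about lattices in the Euclidean motion group and in semisimple Lie groups. First I would consider the closed subgroup $\overline{\proj_A(\Gamma)} \leq A$ generated by the projection of $\Gamma$ to the Euclidean factor. Since $\Gamma$ is a lattice in $G = S \times A \times D$ and $A$ has amenable (indeed virtually abelian) structure, the key input is that the projection of a lattice to a factor, once closed up, is again well-behaved; more precisely I would invoke the results of~\cite{Caprace-Monod_discrete} on lattices in products together with Lemma~\ref{lem:normaliser}. Concretely, let $L = \Gamma \cap (\{1\} \times A \times \{1\})$, a discrete subgroup of $A$, which is normal in $\Gamma$ (being the kernel of the projection to $S \times D$ intersected appropriately). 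The crucial point is to show $L$ is a lattice in $A$, equivalently that $\proj_A(\Gamma)$ is \emph{discrete} in $A$: if $\proj_A(\Gamma)$ were non-discrete, then $\Gamma$ would normalise (via the induced action) a non-discrete subgroup of $A$, and one leverages Lemma~\ref{lem:normaliser} in the form that the normaliser in $A$ of a lattice of $A$ is virtually abelian, to derive a contradiction with irreducibility of the relevant part of $\Gamma$ — this is essentially the corrected version of the argument that failed in Proposition~3.6 of~\cite{Caprace-Monod_discrete}.

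Granting that $L$ is a lattice in $A = \RR^n \rtimes O(n)$, the Bieberbach theorem gives that $L \cap \RR^n =: L_A$ is a lattice in $\RR^n$, hence $L_A \cong \ZZ^n$ is a free abelian subgroup of $\Gamma$ of rank exactly $n$, and by construction its projection to $A$ is $L_A$ itself, a lattice in $\RR^n \leq A$ (hence a lattice in $A$ up to finite index), while its projection to $S$ is trivial since $L_A \leq \{1\}\times A \times \{1\}$. It remains to see that $\Gamma$ \emph{commensurates} $\Gamma_A := L_A$. Here I would argue that for $\gamma \in \Gamma$, conjugation by $\gamma$ preserves $L$ (as $L \trianglelefteq \Gamma$) and hence preserves $L \cap \RR^n = L_A$ up to finite index: indeed $\gamma$ acts on $L$, which is virtually abelian by Bieberbach, and the characteristic finite-index free abelian subgroup $L_A$ of $L$ is preserved by every automorphism of $L$ up to commensurability — more simply, $\gamma L_A \gamma^{-1}$ is again a lattice in $\RR^n$ contained in $L$, so $L_A \cap \gamma L_A \gamma^{-1}$ has finite index in both. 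Thus $\Gamma$ commensurates $\Gamma_A \cong \ZZ^n$.

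Finally, for the rank bound — that $n$ is the largest rank of a commensurated free abelian subgroup — I would defer to Theorem~\ref{thm:EuclideanFactor}\ref{it:commen} applied to the \cat space $X$ on which $G$ acts (with $E \cong \RR^n$ the Euclidean factor coming from $A$), or argue directly: a commensurated free abelian $\ZZ^m \leq \Gamma$ would, by Proposition~\ref{prop:commensurated} applied factor-by-factor, have to act properly on the Euclidean factor $E \cong \RR^n$ (it cannot act minimally on an irreducible non-Euclidean factor, being amenable, nor on $S$'s symmetric space), forcing $m \leq n$. I expect the main obstacle to be the discreteness of $\proj_A(\Gamma)$, i.e.\ establishing that $L$ is genuinely a lattice in $A$ rather than merely a discrete normal subgroup; this is precisely the subtle point that the Leary--Minasyan examples show can fail when $D$ is not present to "absorb" the non-discreteness, so the argument must genuinely use that here $\Gamma$ is a lattice in the full product $S \times A \times D$ and invoke Lemma~\ref{lem:normaliser} to rule out a dense projection to $A$.
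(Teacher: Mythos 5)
There is a genuine gap, and it sits at the exact point you flagged as the ``main obstacle.'' Your plan hinges on showing that $\proj_A(\Gamma)$ is discrete, so that $L=\Gamma\cap(\{1\}\times A\times\{1\})$ is a lattice in $A$ and $\Gamma_A$ can be taken to be the normal subgroup $L\cap\RR^n$. This is false, and the Leary--Minasyan example recalled in this very erratum is a direct counterexample to it \emph{within the hypotheses of the Lemma}: there $S$ is trivial, $A=\RR^2\rtimes SO(2)$, $D$ acts on a tree, and the cocompact lattice $\Gamma<A\times D$ projects onto the \emph{dense} subgroup $\Lambda=(\ZZ[1/5])^2\rtimes_\alpha\ZZ$ of $A$, with $\Gamma\cap A$ far from a lattice (indeed $\Gamma$ has no infinite finitely generated abelian normal subgroup at all). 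You also have the mechanism backwards when you write that non-discreteness ``can fail when $D$ is not present to absorb'' it: it is precisely the presence of the totally disconnected factor that permits a dense projection to $A$. No appeal to Lemma~\ref{lem:normaliser} can rule this out, because $\proj_A(\Gamma)$ need not \emph{normalise} any lattice of $A$ --- it only \emph{commensurates} one; attempting to derive discreteness of $\proj_A(\Gamma)$ is essentially a re-run of the erroneous step in the original Proposition~3.6 that this erratum exists to correct. A secondary point: deferring the rank-maximality to Theorem~\ref{thm:EuclideanFactor}\ref{it:commen} would be circular, since that theorem is proved \emph{using} this Lemma (though maximality is in fact not part of the Lemma's statement).

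The paper's proof takes a different and unavoidable route: by van Dantzig's theorem choose a compact open subgroup $Q<D$ and set $\Gamma^*=\Gamma\cap(S\times A\times Q)$. By \cite[Lemma~3.2]{Caprace-Monod_discrete} this is a lattice in $S\times A\times Q$ which is \emph{commensurated} (not normalised) by $\Gamma$, because $Q$ is commensurated in $D$. Since $Q$ is compact, $\Gamma^*$ projects to a lattice in the Lie group $S\times A$, and Lemmas~3.4 and~3.5 of \cite{Caprace-Monod_discrete} then produce, after passing to a finite index subgroup, the normal subgroup $\Gamma^*_A=\Gamma^*\cap(1\times A\times Q)$, a free abelian group of rank $n$ whose projection to $A$ is a lattice and whose projection to $S$ is trivial; it is commensurated by $\Gamma$ because $\Gamma^*$ is. The moral is that the conclusion one can actually reach is commensuration rather than normality, which is exactly why the Euclidean Factor Theorem had to be weakened; any proof strategy that first manufactures a \emph{normal} abelian subgroup meeting $A$ in a lattice cannot work.
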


\begin{proof}
 Let $Q < D$ be a compact open subgroup. By~\cite[Lemma~3.2]{Caprace-Monod_discrete}, the intersection $\Gamma^* = \Gamma \cap (S \times A \times Q)$ is a lattice in $S \times A \times Q$, which is commensurated by $\Gamma$. Since $Q$ is compact, the projection of $\Gamma^*$ to $S \times A$ is a lattice. Upon replacing $\Gamma^*$ by a finite index subgroup, we may then assume by Lemmas~3.4 and~3.5 from~\cite{Caprace-Monod_discrete} that $\Gamma^*$ possesses two normal subgroups $\Gamma^*_S$ and $\Gamma^*_A$, both commensurated by $\Gamma$, such that $\Gamma^* = \Gamma^*_S \cdot \Gamma^*_A$ and $\Gamma^*_S \cap \Gamma^*_A \se Q$, where $\Gamma^*_A = \Gamma^* \cap (1 \times A \times Q)$ is a finitely generated free abelian group whose projection to $A$ is a lattice. In particular $\Gamma^*_A$ is a free abelian group of rank $n$ which is commensurated by $\Gamma$ and we can define $\Gamma_A=\Gamma^*_A$.
\end{proof}

\begin{proof}[Proof of Theorem~\ref{thm:EuclideanFactor}]
By~\cite[Theorem~M]{amenis}, the only points of $\bd X$ that are possibly fixed by $G$ lie in the visual boundary of the maximal Euclidean factor of $X$. In particular the full isometry group $\Isom(X)$ has no fixed point in $\bd X$. Theorem~1.6 and Addendum~1.8 from~\cite{Caprace-Monod_structure} then provide a canonical $\Isom(X)$-invariant decomposition $X \cong M \times \RR^n \times Y$, where $\RR^n$ is the maximal Euclidean factor of $X$, $S = \Isom(M)$ is a semi-simple Lie group with trivial center and no compact factor, and $D = \Isom(Y)$ is totally disconnected. In particular $\Gamma$ is a lattice in $\Isom(X) = S \times A \times D$, where $A \cong \RR^n \rtimes O(n)$. 

The fact that   $\Gamma$ contains a commensurated free abelian subgroup of rank $n$ now follows from Lemma~\ref{lem:SAD}.
Assume conversely that $\Gamma$ commensurates some abelian subgroup $H$; we only need to consider the case where $H$ is infinite. By Borel density, the projection of $H$ to $S$ is finite; we may thus assume that it is trivial upon replacing $H$ by a finite index subgroup. Recall that an abelian group cannot act minimally on a non-trivial proper \cat space without Euclidean factor (indeed, since the displacement function of an element $g$ is invariant under the centraliser $\centra_H(g)$, the minimality of $H$ implies that $g$ has constant displacement function, and is thus a Clifford isometry). Therefore, Proposition~\ref{prop:commensurated} implies that $H$ fixes a point in each irreducible factor of $Y$, hence also in $Y$ itself. In other words the closure of the projection of $H$ to $S \times D$ is compact. It follows that the $H$-action on the maximal Euclidean factor $\RR^n$ is indeed proper. In particular, if $H$ is free abelian of rank $m$, then $m \leq n$. This proves~\ref{it:commen}. 

\medskip
For the rest of the proof, we suppose that $\Gamma$ is finitely generated.

\medskip
Assume that $\Gamma$ normalises a free abelian subgroup of rank $k$, say $H$. By~\ref{it:commen}, the group $H$ acts properly on the maximal Euclidean factor $\RR^n$. Since it is normal, it yields a $\Gamma$-invariant decomposition $\RR^n = \RR^k \times \RR^{n-k}$, and the $H$-action on the $\RR^{n-k}$-factor is trivial. Since the projection of $\Gamma$ to $\Isom(\RR^k)$ normalises a lattice, it is virtually abelian by Lemma~\ref{lem:normaliser}. Upon passing to a finite index subgroup, we may thus assume that the image of $\Gamma$ in $\Isom(\RR^k)$ is free abelian of the form $\ZZ^k \oplus \ZZ^m$, where $\ZZ^k$ is the image of $H$. Let $\Lambda$ be the preimage of the $\ZZ^m$-factor. Then $\Gamma = H \cdot \Lambda$. Moreover the intersection $H \cap \Lambda$ acts trivially on $\RR^k$. By construction $H$ acts trivially on the complementary Euclidean factor $\RR^{n-k}$. Since moreover $H$ is an abelian normal subgroup of a lattice, it acts trivially on $M \times Y$ by~\cite[Theorem~1.10]{Caprace-Monod_structure} and~\cite[Theorem~1.1]{Caprace-Monod_discrete}. Therefore $H \cap \Lambda$ is trivial, whence $\Gamma \cong H \times \Lambda$. Moreover the projection of $H$ to $\Isom(\RR^{n-k}) \times S \times D$ is trivial, hence the projection of $\Gamma$ is discrete by~\cite[Proposition~3.1]{Caprace-Monod_discrete}. This proves~\ref{it:norma}.

\medskip

Assume now that $\Gamma$ projects to a virtually abelian subgroup of $A$. Since $\Gamma$ is finitely generated, upon replacing $\Gamma$ by a finite index subgroup, we may thus assume that the image of $\Gamma$ is free abelian of the form $\ZZ^n \oplus \ZZ^m$, where $\ZZ^n$ is the image of the commensurated subgroup $\Gamma_A$ provided by~\ref{it:commen}, which acts thus properly on $\RR^n$ and fixes a point in $M \times Y$. Let $\Lambda$ be the preimage of the   $\ZZ^m$-factor in $\Gamma$. Thus $\Lambda$ is normal in $\Gamma$ and we have  $\Gamma = \Gamma_A \cdot \Lambda$. 

Since the closure of the projection of $\Gamma_A$ to $S \times D$ is compact, it follows that the closure of the projection of $\Lambda$, which is denoted by $L$, is cocompact in the closure of the projection of $\Gamma$, which is denoted by $J$. Since $J$ is compactly generated (because $\Gamma$ is finitely generated and maps densely to $J$), the cocompact subgroup $L$ is compactly generated as well. Moreover the derived group $[\Lambda, \Lambda]$ maps trivially to $A$ and is thus a discrete normal subgroup of $L$. It follows that the quotient $L/[\Lambda, \Lambda]$ is a compactly generated abelian locally compact group. It is therefore of the form $K \times \RR^p \times \ZZ^q$ for some $p, q \geq 0$, where $K$ is a compact subgroup (see~\S29 in~\cite{Weil} or~9.8 in~\cite{HR}). The identity component of $L$ maps trivially to $D$. It is thus a connected subgroup of $S$ normalised by the projection of a lattice. By Borel density, it follows that $L^\circ$ is semi-simple, and can therefore not have any non-trivial connected abelian quotient. In view of Lemma~2.4 from~\cite{CCMT}, it follows that  $p=0$, i.e.  $L/[\Lambda, \Lambda] \cong K \times  \ZZ^q$. Now the preimage of $\ZZ^q$ in $L$ is a cocompact discrete normal subgroup. It must therefore be finitely generated, and its centraliser is thus an open subgroup of $L$. By~\cite[Theorem~1.1]{Caprace-Monod_discrete}, the group $L$ acts minimally without a fixed point at infinity on $M \times Y$, and any  cocompact subgroup of $L$ must therefore have trivial centraliser.  It follows that $L$ is discrete. Thus $L$ is a discrete cocompact normal subgroup of $J$. By the same arguments we have just used, it follows that $L$ is finitely generated and has an open centraliser in $J$, from which it follows that $J$ is discrete as well. 

This now implies that, upon replacing $\Gamma_A$ by a finite index subgroup, the projection of $\Gamma_A$ to $S\times D$ is trivial. It then follows that $\Gamma_A$ is normal in $\Gamma$, and  assertion~\ref{it:VirtAb} therefore follows from~\ref{it:norma}. 

\medskip
We now turn to~\ref{it:ResFin}. Let $\Lambda<\Gamma$ be the profinite closure of $\Gamma_A$ in $\Gamma$, that is, the smallest separable subgroup of $\Gamma$ containing $\Gamma_A$. Then $\Lambda$ contains a finite index subgroup $\Lambda_0<\Lambda$ that is normal in $\Gamma$. Indeed, this follows from the main result of~\cite{CKRW} since $\Gamma$ is finitely generated (or it could be deduced from~\cite[Cor.~4.1]{Caprace-Monod_dec}; see~\cite[Rem.~5]{CKRW}).

Since we now assume that $\Gamma$ is residually finite, its profinite topology is Hausdorff and hence $\Lambda$ is commutative. In particular, $\Lambda_0$ acts properly on $E$ by~\ref{it:commen}; it is thus finitely generated and hence, upon possibly replacing it by a further finite index subgroup that is normal in $\Gamma$, we have $\Lambda_0\cong \ZZ^k$ with $k\leq n$. Recalling that $\Lambda$ contains $\Gamma_A$, we have $k=n$ and now the conclusion follows from~\ref{it:norma}.

\medskip
It remains to prove~\ref{last-item}. Let $\Gamma_1\leq \Gamma$ be a finite index subgroup. By \cite[Th.~3.14]{Caprace-Monod_discrete}, the group $\Gamma_1$ acts minimally on $X$. By \cite[Prop.~3.15]{Caprace-Monod_discrete},   there is a $\Gamma_1$-invariant splitting $X \cong E \times X'$ such that $E$ is flat,  $\Gamma_1$ acts  trivially on $\partial E$ and has no fixed point in $\partial X'$. Among all such finite index subgroups, we now pick one, say $\Gamma_0$, for which the flat factor $E$ is of maximal possible dimension. Thus we have a $\Gamma_0$-invariant splitting $X \cong \RR^k \times X'$, and the choice of  $\Gamma_0$ ensures that every $\Gamma_0$-orbit in $\partial X'$ is infinite.  

Fix $k$ pairs $(\xi_1, \xi'_1), \dots, (\xi_k, \xi'_k)$ of antipodal pairs in  the boundary of the flat factor $\RR^k$,  so that the Tits distance from $\xi_i$ to $\xi_j$ is $\pi/2$ for all $i \neq j$. Invoking \cite[Prop.~K]{amenis} $k$ times successively, we deduce that the kernel $\Gamma'$ of the projection of $\Gamma_0$ to $\Isom(\RR^k)$ acts cocompactly on $X'$. Denoting by $H$   the closure of the projection of $\Gamma_0$ to $\Isom(X')$, we infer that $\Gamma'$ is a normal  cocompact lattice in $H$. Since $\Gamma_0$ is finitely generated, the group $H$ is compactly generated, so that $\Gamma'$ is finitely generated. The centralizer $\centra_H(\Gamma')$ is thus open in $H$, so that $\centra(\Gamma') = \Gamma' \cap  \centra_H(\Gamma')$ is a cocompact lattice in $\centra_H(\Gamma')$. By~\cite[Th.~1.10]{Caprace-Monod_structure} and~\cite[Cor.~2.7]{Caprace-Monod_discrete},  the centralizer $\centra_H(\Gamma')$ acts properly on the maximal Euclidean flat factor of $X'$. Therefore the discrete group $\centra(\Gamma')$ also acts properly on a flat, and is thus finitely generated.  If $\centra(\Gamma')$ were infinite, then $\Gamma_0$ would normalize a free abelian subgroup of positive rank contained in $\Gamma'$. In view of \ref{it:norma}, this would yield a finite $\Gamma_0$-orbit in $\partial X'$, which is impossible. Therefore $\centra(\Gamma')$ is finite, so that $\centra_H(\Gamma')$ is compact (because it contains $\centra(\Gamma')$ as a lattice). Since $\centra_H(\Gamma')$ is normal in $H$ and $H$ acts minimally on $X'$, we deduce that $H$ is discrete. This implies that the kernel, denoted by $V$, of the projection $\Gamma_0 \to \Isom(X')$ is a lattice in $\Isom(\RR^k)$. Thus $V$ is a normal subgroup of $\Gamma_0$ virtually isomorphic to $\ZZ^k$. The required conclusion now follows from  \ref{it:norma}. 
\end{proof}

\begin{proof}[{Proof of Corollary~\ref{cor:Ramen}}]
Retaining the notation of the proof of Theorem~\ref{thm:EuclideanFactor}, we know from \cite[Theorem~1.10]{Caprace-Monod_structure} and~\cite[Theorem~1.1]{Caprace-Monod_discrete} that the amenable radical of $\Gamma$ acts trivially on $M \times Y$. Therefore it is a discrete group acting properly on the flat factor $\mathbf R^n$, and is consequently virtually isomorphic to $\mathbf Z^k$. If $\Gamma$ is finitely generated, we may invoke  Theorem~\ref{thm:EuclideanFactor}(ii) and the required conclusions follow. 	
\end{proof}

We end this section with an example showing that the finite generation hypothesis cannot be discarded in Theorem~\ref{thm:EuclideanFactor}(ii).

\begin{example}\label{exam:RxT}
  We construct a CAT($0$) lattice $\Gamma$ with a normal subgroup isomorphic to $\ZZ$, but no finite index subgroup splitting as a direct product $\ZZ\times \Gamma'$, as follows.

Let $A = \ZZ[\frac 1 2]$ and let $B = \ZZ[\frac 1 2]/\ZZ$ be the the Pr\"ufer $2$-group. Then $A$ is a non-trivial central extension of $B$. Let $B^{*3} =B*B*B$ and consider the retraction $\pi\colon B^{*3} \to B$ onto the first factor. We define $\Gamma$ to be the pull-back by $\pi$ of the central extension $A$, thus containing a central subgroup $\ZZ$. We refer to e.g.~\cite[p.~94]{Brown} for the pull-back of an extension and highlight just two facts. First, $\Gamma$ is a fibered product $A \times_B B^{*3}$ over $B$ and hence, in particular, $\Gamma$ is a subgroup of $A\times B^{*3}$. Secondly, the central extension $\Gamma$ is non-trivial because it is classified by the inflation map $\pi^*\colon H^2(B, \ZZ) \to H^2(B^{*3}, \ZZ)$ and the latter is injective since $\pi$ admits a right inverse, the canonical inclusion.

We claim that $\Gamma$ does not have any finite index subgroup $\Gamma_0$ that split as $\Gamma_0 = \ZZ\times \Gamma'$. Indeed, assume for a contradiction that such a finite index subgroup $\Gamma_0$ exists. We identify $A$ with its image in $\Gamma$ and   set $A_0 = A \cap \Gamma_0$. Since the center of $\Gamma$ is contained in $A$, we see that $A_0$ contains a finite index subgroup of the   center of $\Gamma$. The group $A$ is $2$-divisible (i.e. for every $n$, every element admits a $2^n$-th root), so every finite index subgroup is of odd index, and is thus itself $2$-divisible. In particular, the restriction of the projection map $\Gamma_0\to \ZZ$ to $A_0$ must be trivial. It follows that the restriction of the projection map $\Gamma_0\to \Gamma'$ to $A_0$ is injective. Since $B$ is both $2$-divisible and $2$-torsion, it does not admit any non-trivial finite quotient. Therefore the same holds for $B^{*3}$. Using the classification of commuting elements in free products (see e.g.~\cite[4.5]{MKS}), we see that the restriction of the projection map $\Gamma \to  B^{*3}$ to $\Gamma_0$ is trivial on the $\ZZ$ factor, and induces an isomorphism $ \Gamma' \to B^{*3}$. From the discussion above, we infer that $A_0$, which contains an infinite cyclic central subgroup of $\Gamma$,  injects in $B^{*3}$ under the projection map $\Gamma \to  B^{*3}$. This is absurd, since the center of $B^{*3}$ is trivial. This contradiction confirms the claim.

Finally, we proceed to realize $\Gamma$ as a lattice in the isometry group of the cocompact CAT($0$) space $\RR\times T$, where $T$ is the trivalent tree. The group $B^{*3}$ can be seen as the fundamental group of a graph of finite groups, consisting of three rays of groups emanating from a common origin. The vertex and edge groups attached to vertices and edges at distance $n$ from the origin are cyclic groups of order~$2^n$. The fundamental group of that graph of groups indeed acts properly on $T$, and is a non-uniform lattice in $\Aut(T)$ by Serre's covolume formula. By construction, $\Gamma$ is a subgroup of $A \times  B^{*3} \leq \Isom(\RR) \times \Aut(T)$. Since the image of the projection of $\Gamma$ to $\Aut(T)$ is a lattice in $\Aut(T)$ and the kernel of that projection is a lattice in $\Isom(\RR)$ (namely $\ZZ$), it follows that $\Gamma$ is indeed a CAT($0$) lattice, as required. 
\end{example}

\section{Further corrections}
The main point of this erratum is that Theorem~1.3(i) in~\cite{Caprace-Monod_discrete} should be replaced by Theorem~\ref{thm:EuclideanFactor} above. All other results presented throuhout the Introduction of~\cite{Caprace-Monod_discrete} remain valid without change.

We now proceed to indicate all modifications in the body of~\cite{Caprace-Monod_discrete} that are required by this correction. We only discuss the results that rely directly or indirectly on Theorem~1.3(i) as all the other results are unaffected and hold without changes, with one exception: in Section~6.C of~\cite{Caprace-Monod_discrete}, the construction of pathological examples is mistaken. Specifically, the properness asserted in Lemma~6.9 does not hold.

\begin{list}{\textbullet}{\leftmargin=1em \itemindent=0em}
\setlength{\itemsep}{5pt}
\item Theorem~1.3(ii) remains valid, but it contains a typo: the statement should read ``$\Isom(X)$ has no fixed point at infinity''. We remark that Theorem~1.3(ii) has been generalised to infinitely generated lattices in~\cite[Theorems~L and~M]{amenis}.

\item Proposition~3.6 should be replaced by Lemma~\ref{lem:SAD} above. 

\item Theorem~3.8, which is a reformulation of Theorem~1.3(i), should be replaced by Theorem~\ref{thm:EuclideanFactor} above. 

\item Corollary~3.10 should be replaced by Theorem~\ref{thm:EuclideanFactor}\ref{it:commen} above. When the lattice is residually finite, then Corollary~3.10 holds as is by Theorem~\ref{thm:EuclideanFactor}\ref{it:ResFin}.

\item Theorem~4.2 remains valid as is, although its proof currently relies on Theorem~3.8. The proof can be corrected as follows. We use the notation from loc. cit. Let $X'_2 = \RR^n \times X''_2$ be the canonical decomposition of $X'_2$, where $\RR^n$ is the maximal Euclidean factor. Let $G_2$ be the closure of the projection of $\Gamma_0$ to $\Isom(X'_2)$ and $\Gamma_2 < G_2$ be the kernel of the projection of $\Gamma_0$ to $\Isom(X'_1)$. The current proof shows that $\Gamma_2$ is a normal, hence cocompact and finitely generated, lattice in $G_2$. Moreover the centraliser $\centra_{G_2}(\Gamma_2)$ is open in $G_2$. Therefore the intersection $\Gamma_2\cap \centra_{G_2}(\Gamma_2)$ is a lattice in $\centra_{G_2}(\Gamma_2)$. Since $\Gamma_2\cap \centra_{G_2}(\Gamma_2)$ is nothing but the center of $\Gamma_2$, it is an abelian normal subgroup of $\Gamma_0$.  Since $\Gamma_0$ is irreducible by hypothesis, Theorem~\ref{thm:EuclideanFactor}\ref{it:norma} implies that $\Gamma_2\cap \centra_{G_2}(\Gamma_2)$ is finite. Therefore $\centra_{G_2}(\Gamma_2)$ is compact, hence trivial since $G_2$ acts minimally on $X'_2$. Hence the group $G_2$ is discrete.
%it acts trivially on $X''_2$, as a consequence of \cite[Th.~1.3(ii) and Cor.~2.7]{Caprace-Monod_discrete}. 
%contained in the translation subgroup of $\Isom(\RR^n)$ by~\cite[Theorem~1.1]{Caprace-Monod_discrete} and Theorem~\ref{thm:EuclideanFactor}\ref{it:commen} above.  Therefore  the projection of $\centra_{G_2}(\Gamma_2)$ to $\Isom(\RR^n)$ is a proper map, and it follows that} $\centra_{G_2}(\Gamma_2)$ is a Lie group. In particular,   the identity component $\centra_{G_2}(\Gamma_2)^\circ$ is open in $G_2$, so that $G_2^\circ$, which is contained in the open subgroup $\centra_{G_2}(\Gamma_2)$, is open as well. Therefore the intersection $\Gamma_2 \cap G_2^\circ$ must be a lattice in $G_ 2^\circ$. Hence $\Gamma_2 \cap G_2^\circ$ is a free abelian normal subgroup of $\Gamma_0$. Since $\Gamma_0$ is irreducible by hypothesis, Theorem~\ref{thm:EuclideanFactor}\ref{it:norma} implies that $\Gamma_2 \cap G_2^\circ$ is trivial. Hence $G_2^\circ$ is trivial, and $G_2$ is thus discrete. 
We infer that the inclusion $\Gamma_0 < G_1 \times G_2$ is of finite index, which contradicts the irreducibility assumption on $\Gamma_0$. This finishes the first part of the proof when the $G$-action on $X$ is minimal. 

The case when the $G$-action on $X$ is not minimal reduces to the minimal case, since a $\Gamma_0$-invariant splitting $X = X_1 \times X_2$ induces a $\Gamma_0$-invariant splitting $X' = X'_1 \times X'_2$ of the canonical minimal $\Gamma_0$-invariant subspace $X'$. The argument given in~\cite{Caprace-Monod_discrete} relies on the fact that $X'$ has no Euclidean factor; however the statement holds even in the presence of a Euclidean factor, since $X'$ is co-bounded in $X$ and thus has the same Tits boundary. The desired decomposition of $X'$ can then be obtained as a consequence of Propositions~3.6 and~3.11 from~\cite{Caprace-Monod_structure}. 

Finally, it remains to prove that if $G$ acts minimally on $X$ and $\Gamma = \Gamma' \times \Gamma''$ splits non-trivially, then there is a non-trivial $\Gamma$-equivariant splitting $X = X_1 \times X_2$ such that the projection of $\Gamma$ to at least one of the two factors is discrete.

By~\cite[Th.~1.10]{Caprace-Monod_structure} and~\cite[Cor.~2.7]{Caprace-Monod_discrete}, on each irreducible non-Euclidean factor of $X$, either the projection of $\Gamma'$ is trivial or the projection of $\Gamma''$ is trivial. Thus there is a canonical decomposition $X = \RR^n \times X'_1 \times \dots \times X'_p \times X''_1 \times \dots \times X''_q$ such that $X'_i$ and $X''_j$ are non-Euclidean irreducible for all $i, j$, and moreover the $\Gamma'$-action on $X''_j$ (resp. the $\Gamma''$-action on $X'_i$) is trivial.  

Assume first that $X = \RR^n \times X'_1 \times \dots \times X'_p$. Then $\Gamma''$ is a discrete subgroup of $\Isom(\RR^n)$. Since $\Gamma$ acts cocompactly on $\RR^n$, its action is minimal, so the direct factor $\Gamma''$ must be infinite. Hence $\Gamma''$ is virtually isomorphic to $\ZZ^k$ for some $k \geq 1$, and the desired splitting of $X$ is afforded by Theorem~\ref{thm:EuclideanFactor}\ref{it:norma}. Assume now that $X''_1 \times \dots \times X''_q$ is not reduced to a single point. Let  $G_1$ be the closure of the projection of $\Gamma$ to $\Isom(\RR^n \times X'_1 \times \dots \times X'_p)$. Then $\Gamma'$ is a finitely generated discrete normal subgroup of $G_1$, so   the centraliser $\centra_{G_1}(\Gamma')$ is open in $G_1$, and acts trivially on $X'_i$ for all $i$ by \cite[Cor.~2.7]{Caprace-Monod_discrete}. Notice that   $\Gamma' \cap \centra_{G_1}(\Gamma') = \centra(\Gamma')$ is finitely generated, since it acts properly on $\RR^n$. If $\centra(\Gamma')$ is infinite, then $\Gamma$ virtually normalizes  a subgroup isomorphic to $\ZZ^k$ for some $k \geq 1$, and we conclude again by invoking Theorem~\ref{thm:EuclideanFactor}\ref{it:norma}. If $\centra(\Gamma')$ is finite, then $\centra_{G_1}(\Gamma')$  is compact (because it contains $\centra(\Gamma')$ as a lattice), hence trivial since it is normal in the group $G_1$, which  acts minimally on $\RR^n \times X'_1 \times \dots \times X'_p$. Therefore $G_1$ is discrete, and we obtain the desired splitting by setting $X_1 = \RR^n \times X'_1 \times \dots \times X'_p$ and $X_2 = X''_1 \times \dots \times X''_q$. This concludes the proof.

%. Then $\Gamma'$ is a finitely generated discrete normal subgroup of $G_1$. Therefore the centraliser $\centra_{G_1}(\Gamma')$ is open in $G_1$, and acts trivially on $X'_i$ for all $i$. Hence $\centra_{G_1}(\Gamma')$ is contained as a closed subgroup  in $\Isom(\RR^n)$, and is thus a Lie group. It follows that the identity component $G_1^\circ$ is open in $G_1$. 
%
%Notice that if $G_1$ is discrete, then we are done. Otherwise $G_1^\circ$ has a non-trivial minimal invariant sub-flat $\RR^k \se \RR^n$. Since $G_1^\circ$ is normal in $G_1$, this yields a $G_1$-invariant decomposition $\RR^n = \RR^k \times \RR^{n-k}$ such that the $G_1^\circ$-action on $\RR^{n-k}$ is trivial. Since $G_1^\circ$ acts minimally on $\RR^k$, it is cocompact in $\Isom(\RR^k)$. Therefore, the restriction of the projection 
%$\Isom(\RR^k) \times \Isom(\RR^{n-k} \times   X'_1 \times \dots \times X'_p) \to  \Isom(\RR^{n-k} \times   X'_1 \times \dots \times X'_p)$
%induces a proper map $G_1/G_1^\circ \to  \Isom(\RR^{n-k} \times   X'_1 \times \dots \times X'_p)$. Since the kernel of that projection contains $G_1^\circ$ which is open, it follows that the projection of $G_1$ to $  \Isom(\RR^{n-k} \times   X'_1 \times \dots \times X'_p)$ is discrete. Hence, so is the projection of $\Gamma$. This concludes the proof.  

We record at this occasion that Theorem~4.2(i) \emph{does not} generalise to infinitely generated lattices. Indeed, consider the lattice $\Gamma$ in $\Isom(\RR) \times \Aut(T)$ constructed in Example~\ref{exam:RxT} above; by construction, $\Gamma$ projects discretely to $\Aut(T)$. Therefore $\Gamma$ does not satisfy the conclusion of Theorem~4.2(i) even though it is irreducible as an abstract group; the latter fact follows readily from the classification of commuting elements in free products alluded to in Example~\ref{exam:RxT}.

\item Lemma 4.7 holds under the additional assumption that $X'$ has no Euclidean factor. 

\item Theorem 4.10 holds since it concerns finitely generated residually finite lattices, in which case the Euclidean factor theorem holds by Theorem~\ref{thm:EuclideanFactor}\ref{it:ResFin} above. The references to Lemma~4.7 are harmless since we are in the setting where the additional assumption necessary for Lemma~4.7 holds. 

\item 
In Theorem~4.11, the hypothesis that $\Gamma$ and $\Lambda$ do not split virtually a $\ZZ^n$ factor should be replaced by the condition that they do not commensurate a $\ZZ^n$ subgroup. By Theorem~\ref{thm:EuclideanFactor}\ref{it:commen} this implies that the ambient \cat spaces $X$ and $Y$ do not have non-trivial Euclidean factors, and the proof goes through without changes. 

\item Corollary~4.13 holds without changes. Indeed, although its proof currently relies on Theorem~3.8, this dependence can be avoided by using Theorem~\ref{thm:EuclideanFactor}\ref{it:commen}, together with a similar argument as in the proof of Theorem~4.11.

\item Theorem~6.1 holds; the statement of part (ii) contains typos and should  be replaced by the following: \emph{There is a normal subgroup $\Gamma_D \se \Gamma$ which is either finite or infinitely generated, and such that the quotient $\Gamma/ \Gamma_D$ is an arithmetic lattice in a product of semi-simple Lie and algebraic groups.} In the proof, the definition of $\Gamma_D$ has to be slightly modified, since it can happen that $X'$ has a non-trivial Euclidean factor $\RR^n$. We write $\Isom(X') = S \times A \times D$ as in the proof of Theorem~\ref{thm:EuclideanFactor} above. Since $\Gamma$ is irreducible by hypothesis, it follows from \cite[Th.~4.2(i)]{Caprace-Monod_discrete}, together with the Borel density theorem, that the projection of $\Gamma$ to $S$ is dense. We replace  $A$ and $D$ by the closures of the respective projections of $\Gamma$. 

Let $F < \Gamma$ be a commensurated free abelian subgroup of rank $n$ provided by Theorem~\ref{thm:EuclideanFactor}\ref{it:commen}. Upon replacing $F $ by a finite index subgroup we may assume that $F$ acts properly by translations on the Euclidean factor. Let $M$ be the normal closure of $F$ in $\Gamma$, and $N$ be the kernel of the projection of $\Gamma $ to $S \times A$. Since  the projection of $F$ to $S$ is trivial, the projection of $M$ to $S$ is trivial as well. Define $\Gamma_D = M \cdot N$. 

Observe that $N$ is a discrete normal subgroup of $D$. Thus $\Gamma/N$ embeds as a lattice in $S \times A \times (D/N)$. The image $MN/N$ of $M$ now embeds in $A$ (because $M$ has trivial image in $S$), and is thus an abelian normal subgroup of $\Gamma/N$. Let $M_A$ (resp. $M_D$) denote the closure of the projection of $M$ to $A$ (resp. to $D/N$). Let $A' = A/M_A$ and $D' = (D/N)/M_D$. Then $A'$ is compact  and $D'$ is totally disconnected. We next show that the natural image of $M$, which is isomorphic to $MN/N$, is a lattice in $M_A \times M_D$. To this end, remark first that $MN/N$ is discrete. Moreover it contains the group $F$ whose projection to $M_A$ is a lattice and whose projection to $M_D$ is compact. Since the projection of $M$ to $M_D$ is dense by construction, it follows that $MN/N$ is cocompact in $M_A \times M_D$. This confirms that $MN/N$ is a lattice in $M_A \times M_D$.  We deduce  that $\Gamma/\Gamma_D \cong (\Gamma/N)/(MN/N)$ is a cocompact lattice in $S \times A' \times D'$, hence also in $S \times D'$ since $A'$ is compact. Notice moreover that the image of the projection of $\Gamma/\Gamma_D$ to $S$ is dense (because it coincides with the image of the projection of $\Gamma$). Similarly, the image of the projection of   $\Gamma/\Gamma_D$ to $D'$ is dense (by the definition of $D'$).  The conclusion of Theorem 6.1 therefore holds as a consequence of~\cite[Th.~5.18]{Caprace-Monod_discrete}.

It remains to show that $\Gamma_D$ cannot be infinite and  finitely generated. When $A$ is trivial (i.e. $X'$ has no Euclidean factor) the original argument goes through. Otherwise the group $M$ defined above is non-trivial. If $\Gamma_D$ is finitely generated, then so is $MN/N$, since it is a quotient of $\Gamma_D$. Thus the projection of $\Gamma$ to $A$ normalises a finitely generated cocompact group of translations. It follows from Lemma~\ref{lem:normaliser} that the projection of $\Gamma$ to $A$ is virtually abelian. By Theorem~\ref{thm:EuclideanFactor}\ref{it:VirtAb} this implies that  $\Gamma$ virtually decomposes as a direct product, contradicting the hypotheses. 

\item Theorems~6.2 and~6.3 hold without changes.

\item Theorem~6.6 holds without changes, since the hypotheses imply that $\Gamma$ is finitely generated and residually finite, in which case 
 the Euclidean factor theorem holds by Theorem~\ref{thm:EuclideanFactor}\ref{it:ResFin} above. 

\end{list}

\bigskip
\noindent
\textbf{Acknowledgements.}
We are grateful to Ian Leary and Ashot Minasyan for sending us their preprint~\cite{LM} and for alerting us to the inconsistency between their beautiful results and our mistaken claim. We also thank them for comments on an earlier version of this note. 

%%%%%%%%%%%%%%%%%%%%%%%%%%%%%%%%%%%%%%%%%%%%%%%%%%%%%%%%%%%%%%%%%%%%%%%%%%%%%%%%%%%%%%%%
%%%%%%%%%%%%%%%%%%%%%%%%%%%%%%%%%%%%%%%%%%%%%%%%%%%%%%%%%%%%%%%%%%%%%%%%%%%%%%%%%%%%%%%%

%\bibliographystyle{amsalpha}
%\bibliography{../IsomCAT0}

\providecommand{\bysame}{\leavevmode\hbox to3em{\hrulefill}\thinspace}

\end{document}